\definecolor{lw}{RGB}{255,0,0}
\definecolor{kb}{RGB}{0,255,0}
\newtheorem{thm}{Theorem}[section]
\newtheorem{prop}[thm]{Proposition}
\newtheorem{lemm}[thm]{Lemma}
\newtheorem{remark}[thm]{Remark}
\newtheorem{example}[thm]{Example}
\newtheorem{assumption}{Assumption}
\def\R{{\mathbb{R}}}
\def\E{{\mathbb{E}}}
\def\P{{\mathbb{P}}}
\def\im{{\mathrm{i}}}
\def\sA{{\mathcal{A}}}
\def\sL{{\mathcal{L}}}
\def\sR{{\mathcal{R}}}
\def\sS{{\mathcal{S}}}
\def\e{{\mathrm{e}}}
\def\rd{{\mathrm{d}}}
\def\id{{\mathbf{1}}}
\DeclareMathOperator*\dist{dist}
\newcommand*{\rmnum}[1]{\expandafter\@slowromancap\romannumeral #1@}
\title{On the exact asymptotics of exit time from a cone of an isotropic
$\alpha$-self-similar Markov process with a skew-product structure}
\author{Zbigniew Palmowski}
\address{Faculty of Pure and Applied Mathematics, Wroclaw University of Science and Technology, Wyb. Wyspia\'nskiego 27, 50-370 Wroclaw, Poland}
\email{zbigniew.palmowski@gmail.com}
\author{Longmin Wang}
\address{School of Mathematical Sciences, Nankai University, Tianjin 300071, P.R. China}
\email{wanglm@nankai.edu.cn}
\thanks{This work is partially supported by the Ministry of Science and
Higher Education of Poland under the grant 2015/17/B/ST1/01102
(2016-2019). ZP and LW kindly acknowledges partial support by the project RARE -318984, a Marie Curie IRSES Fellowship within the 7th European
Community Framework Programme.}
\date{\today}
\subjclass[2000]{31B05, 60J45} %
\keywords{}
\begin{document}

\begin{abstract}
In this paper we identify the asymptotic tail of the distribution of the exit time $\tau_C$ from a cone $C$ of an isotropic
$\alpha$-self-similar Markov process $X_t$ with a skew-product structure, that is
$X_t$ is a product of its radial process and independent time changed angular component $\Theta_t$. Under some additional
regularity assumptions, the angular process $\Theta_t$
killed on exiting from the cone $C$ has the transition density that
could be expressed in terms of
a complete set of orthogonal eigenfunctions
with corresponding eigenvalues of an appropriate generator.
Using this fact and some asymptotic properties of the exponential functional of a killed L\'evy process
related with Lamperti representation of the radial process, we prove that
$$\P_x(\tau_C>t)\sim h(x)t^{-\kappa_1}$$
as $t\rightarrow\infty$ for $h$ and $\kappa_1$ identified explicitly.
The result extends the work of DeBlassie \cite{DBla88} and Ba{\~n}uelos and Smits \cite{BS97} concerning the Brownian motion.
\vspace{3mm}

\noindent {\sc Keywords.}  $\alpha$-self-similar process $\star$ cone $\star$ exit time $\star$ skew-product structure $\star$ Lamperti representation $\star$ exponential functional $\star$ Brownian motion.

\end{abstract}

\maketitle

\pagestyle{myheadings} \markboth{\sc Z.\ Palmowski
--- L.\ Wang} {\sc Exit time of a isotropic
$\alpha$-self-similar Markov process}

\vspace{1.8cm}

\tableofcontents

\newpage

\section{Introduction}

For a dimension $d \geq 2$ and an index $\alpha > 0$  on some probability space $(\Omega, \mathcal{F}, \P_x)$ we consider
an $\R^d$-valued {\bf $\alpha$-self-similar isotropic Markov process}
$\{X_t, t\geq 0\}$,
where $\P_x(\cdot=\P(\cdot|X_0=x)$.
We recall that process $X$ is said to be $\alpha$-self-similar if for every $x \in \R^d$
and $\lambda > 0$,
\begin{center}
  \it the law of $(\lambda X_{\lambda^{-\alpha} t}, t \geq 0)$ under
  $\P_x$ is the same as $\P_{\lambda x}$.
\end{center}
Moreover, this process is said to be isotropic (or $O(d)$-invariant), if for any
$x \in \R^d$ and $\varrho \in O(d)$,
\begin{center}
  \it the law of $(\varrho(X_t), t \geq 0)$ under $\P_x$ is the same as
  $\P_{\varrho(x)}$,
\end{center}
where $O(d)$ is the group of orthogonal transformations on $\R^d$.
In this paper we assume that the radial process $R_t = |X_t|$
and the angular process $X_t/R_t$ do not jump at the same time.
Then by Liao and Wang \cite[Theorem 1]{LW11} the process $X_t$ observed up to its first hitting time of $0$
has a {\bf skew-product structure}:
\begin{equation}\label{skewstructure}X_t = R_t \Theta_{A(t)},\end{equation}
where $A(t)$ is a strictly
increasing continuous process defined by
\begin{equation}
  \label{e:At}
  A(t) = \int_0^t R_s^{- \alpha} \rd s
\end{equation}
and $\Theta_t$ is an $O(d)$-invariant Markov process on the unit
sphere $S^{d-1}$ and is {\bf independent} of the radial
process $R_t$.
The classical example concerns $d$-dimensional Brownian motion
that may be expressed as a product of a Bessel process
and a time changed spherical Brownian
motion. Moreover, the
Bessel process is independent of the spherical
Brownian motion. More generally, any continuous isotropic Markov
proces will have above representation \eqref{skewstructure} with
possibly different time change; see \cite{Gal63}.
In particular, a self-similar diffusion will have it.
Note that an isotropic self-similar Markov process might not satisfy above representation \eqref{skewstructure} though.
The most famous examples are the symmetric $(1/\alpha)$-stable L\'evy processes for
$\alpha > 1/2$. Their L\'evy measures are absolutely continuous on $\R^d\backslash\{0\}$, so their radial and angular
parts may jump together, and thus do not possess a skew product structure as defined above.

We will also consider an {\bf open cone} $C$ in $\R^d$ generated by
a domain $D$  in the unit sphere $S^{d-1}$, that is
$C = \cup_{r > 0} r D$.
We define the {\bf first
exit time} of $X_t$ from the cone $C$ by
\begin{equation}
  \label{e:exittime}
  \tau_C = \inf\{ t > 0:\ X_t \not\in C \}.
\end{equation}
The purpose of this paper is to study the {\bf asymptotic behavior of the
exit probability} $\P_x (\tau_C > t)$ as $t \to \infty$ for $x \in C$.
In fact we prove that
\begin{equation}\label{asmain}
\P_x(\tau_C>t)\sim h(x)t^{-\kappa_1}\end{equation}
as $t\rightarrow\infty$ for $h$ and $\kappa_1$ identified explicitly,
where we write $f(t)\sim g(t)$ for some positive functions $f$ and $g$ iff $\lim_{t\to\infty}f(t)/g(t)=1$.

The main idea of the proof is based on the following steps. In the first one we give the following representation
\begin{equation*}
  q_D(t,\theta,\eta) = \sum_{j = 1}^{\infty} \e^{- \lambda_j t}
  m_j(\theta) m_j(\eta),
\end{equation*}
of the transition density for the angular process $\Theta_t$
killed upon exiting from the cone $C$ in terms of
orthogonal eigenfunctions $m_j$
with corresponding eigenvalues $\lambda_j$ of
$- \sS \big|_D$ for the generator
$\sS$ of $\Theta_t$ restricted to $D$ with  Dirichlet boundary condition.
Then
\begin{equation*}
\P_x(\tau_C > t) = \sum_{j=1}^{\infty} m_j(x/|x|) \left( \int_D m_j \rd \sigma
  \right) \E_{|x|}
  \left[ \e^{- \lambda_j A(t)}, t < T_0 \right]
\end{equation*}
for $\sigma$ being the
normalized surface measure on $S^{d - 1}$, where
\begin{equation}
  \label{e:zeta}
  T_0 = \inf\{t > 0:\ X_t = 0 \} = \inf\{ t > 0:\ R_t = 0\}.
\end{equation}
Using Lamperti \cite{Lam72} transformation we can express process $\{R_t, t<T_0\}$
as a time change of the exponential of an $\R \cup
\{-\infty\}$-valued L\'evy process, that is,
there exists an $\R \cup \{- \infty\}$-valued
L\'evy process $\xi_t$ starting from $0$ and with lifetime $\zeta$,
whose law does not depend on $|x|$, such that
\begin{equation}
  \label{e:pssMp}
  R_t = |x| \exp \left( \xi_{A(t)} \right), \quad 0 \leq t < T_0.
\end{equation}
This gives the following representation of the tail exit probability:
\begin{equation*}
  \label{e:exitprobs}
  \P_x(\tau_C > t) = \sum_{j = 1}^{\infty} m_j(x / |x|) \left( \int_D
    m_j \rd \sigma \right) \P \left( I_{e_{\lambda_j}}(\alpha \xi) >
    |x|^{- \alpha} t \right),
\end{equation*}
where
\begin{equation}
  \label{e:Tt}
  I_t(\alpha \xi) = \int_0^t \exp(\alpha \xi_s) \rd s
\end{equation}
and
  $I(\alpha \xi) = \lim_{t \to \zeta} I_t(\alpha \xi)$ is an exponential functional. The final result \eqref{asmain} follows from
Rivero \cite[Lemma 4]{Riv05a} and Maulik and Zwart \cite[Theorem 3.1]{MZ06}
concerning tha tail asymptotics of the exponential functional
$I(\alpha \xi)$.
In this case $\kappa_1$ solves equation
\begin{equation}\label{kappajeden}
\phi(\alpha \kappa_1) = \lambda_1\end{equation}
for the Laplace exponent of the process $\xi$.

Our main result \eqref{asmain} extends the work of
DeBlassie \cite{DBla88} and Ba{\~n}uelos and Smits \cite{BS97} concerning the Brownian motion
(see also \cite{Hernandez} for $\alpha$-stable process case).

The asymptotics \eqref{asmain} determines also
the critical exponents of integrability of the exit time $\tau_C$.
In this sense it generalizes series of papers concerning $\alpha$-stable process, see
Kulczycki \cite{kulcz} and Ba{\~n}uelos and Bogdan \cite{BB04} and references therein.

The paper is organized as follows.
In Preliminaries we give and prove main facts used later.
In the next Section \ref{sec:main} we give the main result and its proof.

\section{Preliminaries}\label{sec:prel}

\subsection{Skew-product structure}
\label{s:levy}

Let $X_t$ be an $\alpha$-self-similar isotropic Markov process
with skew-product representation \eqref{skewstructure}.
The process $\Theta_t$ is an $O(d)$-invariant Markov process on $S^{d - 1}$ with
transition semigroup $Q_t$ and infinitesimal generator $\sS$.

Throughout this paper, we assume that
\begin{assumption}
  \label{a1}
  $\Theta_t$ possesses a bounded
transition density $q(t,\theta,\eta)$ with respect to $\sigma$, the
normalized surface measure on $S^{d - 1}$, and there exist positive
constants $C$ and $\beta$ such that
\begin{equation}
  \label{e:hkbound}
  q(t,\theta,\eta) \leq C t^{- \beta}
\end{equation}
for all $(t, \theta,\eta) \in (0, \infty) \times S^{d - 1} \times S^{d
- 1}$.
\end{assumption}

\begin{example}[Brownian motion]\rm
In the case when $\Theta_t$ is a Brownian motion
the Assumption \ref{a1} is satisfied. Indeed,
the generator $\sS$ of $\Theta_t$ is a multiple of the Laplace-Beltrami operator
$\Delta_{S^{d - 1}}$ on $S^{d -
  1}$. Moreover, it is known that the transition density
$h(t,\theta,\eta)$ of $\Theta_t$
 has the Gaussian upper bound:
\begin{equation}\label{gaussianbound}
 h(t,\theta,\eta) \leq c_1 t^{- \frac{d - 1}{2}} \e^{- c_2
   \frac{d(\theta,\eta)^2}{t}}, \quad t > 0, \ \theta, \eta \in S^{d-1}
\end{equation}
for some positive constants $c_1$ and $c_2$; .
\end{example}

\begin{example}[Subordinate Brownian motion]\rm
  \label{E:sbm}
  Fix $\gamma \in (0, 1)$. Let $W_t$ be a Brownian motion on $S^{d -
    1}$ with transition density $h(t,\theta,\eta)$. Let $S_t$ be a
  $\gamma$-stable subordinator, i.e., a
  L\'evy process in $\R$, supported by $[0, \infty)$, with Laplace
  transform
  \[ \E \left[ \e^{- \vartheta S_t} \right] = \exp (- t
  \vartheta^{\gamma}), \quad \vartheta > 0. \]
  The Assumption \ref{a1} is also satisfied when $\Theta_t =
  W_{S_t}$.
  Indeed, in this case $\Theta_t$
  is an $O(d)$-invariant pure
  jump Markov process on $S^{d - 1}$ with transition density
  \[ q(t,\theta, \eta) = \int_0^{\infty} h(u,\theta, \eta) p_t(u) \rd
    u, \]
  where $p_t(u)$ is the probability
  density of $S_t$.
  By Theorem 37.1 of Doetsch \cite{Doe74},
  \begin{equation}\label{limitu}
   \lim_{u \to \infty} p_1(u) u^{1 + \gamma} =
  \frac{\gamma}{\Gamma(1 - \gamma)}. \end{equation}
  The limit \eqref{limitu} together with the  scaling
  property:
  \[ p_t(u) = t^{- \frac{1}{\gamma}} p_1(t^{- \frac{1}{\gamma}} u) \]
  give the following upper bound:
  \[ p_t(u) \leq c_1 t u^{- 1 - \gamma}, \quad t, u > 0. \]
  Using \eqref{gaussianbound} one can observe now that
  \[ q(t,\theta,\eta) \leq c_3 t^{- \frac{d - 1}{2 \gamma}} \wedge
  \frac{t}{d(\theta,\eta)^{d - 1 + 2 \gamma}}\]
  for $(t,\theta,\eta)
  \in (0, \infty) \times S^{d - 1} \times S^{d - 1}$.
\end{example}

We will now give sufficient conditions for the Assumption \ref{a1} to be
satisfied for general $O(d)$-invariant Markov
process $\Theta_t$ on $S^{d - 1}$.
If we identity $S^{d - 1}$ with $O(d) / O(d - 1)$ then
$\Theta_t$ may be viewed as a L\'evy process on the compact homogeneous
space $O(d) / O(d - 1)$. Furthermore,  the generator $\sS$
of $\Theta_t$ was given by Hunt \cite{Hun56}
(see also Liao \cite{Lia04a}) explicitly.
We state this result as follows.
Let $C^{\infty}(S^{d - 1})$ be the space of smooth functions on $S^{d
  - 1}$
and let  $\pi: O(d)\rightarrow S^{d - 1}$  be the
map $g\rightarrow g \overline{o}$ for $\overline{o}=(0,\ldots, 0,1)\in \R^d$.
Restricted to a sufficient small neighborhood $V$ of $\overline{o}$, the map
\[\varphi : (y_1, \ldots, y_d ) \rightarrow \pi(e^{
\sum_{j=1}^d y_j O_j})\]
is a diffeomorphism and $(y_1, \dots , y_d)$ may be used as local coordinates on $\varphi(V)$, where
$(O_1,\dots, O_d)$ is a basis of Lie algebra of $O(d)$.
Then by \cite[Theorem 2.2]{Lia04a} the domain of $\sS$
contains $C^{\infty}(S^{d - 1})$ and for $f \in C^{\infty}(S^{d -
  1})$,
\begin{equation}
  \label{e:S0}
  \sS f(o) = T f(o) + \int_{S^{d - 1}}
  \left(f(\theta) - f(o)-\sum_{j=1}^d y_j(\theta)\frac{\partial f(o)}{\partial y_j} \right)\nu(\rd \theta),
\end{equation}
where $o$ is the origin on $S^{d - 1}$, $T$ is an $O(d)$-invariant
second order differential operator on $S^{d-1}$ and $\nu$ is an $O(d -
1)$-invariant measure on $S^{d - 1}$, called
the L\'evy measure of $\Theta_t$, that satisfies $\nu(\{o\}) = 0$ and
\begin{equation}
  \label{e:levymeas}
  \int_{S^{d - 1}} [\dist(\theta, o)]^2 \nu(\rd \theta) < \infty.
\end{equation}
Since $O(d) / O(d-1)$ is irreducible, all the $O(d)$-invariant second
order differential operators are multiples of
$\Delta_{S^{d-1}}$. Therefore we may rewrite \eqref{e:S0} as
\begin{equation}
  \label{e:S}
  \sS f(o) = a \Delta_{S^{d-1}} f(o) + \int_{S^{d - 1}}
  \left(f(\theta) - f(o)-\sum_{j=1}^d y_j(\theta)\frac{\partial f(o)}{\partial y_j} \right)\nu(\rd \theta)
\end{equation}
for some $a \geq 0$.

Note that when $\Theta_t$ is a subordinate Brownian motion defined in
Example \ref{E:sbm}, we have $a = 0$ and the L\'evy measure
\[ \nu \asymp d(\theta,o)^{- d + 1 - 2 \gamma} \quad \text{near }
o. \]
As observed in this case the Assumption \ref{a1} holds true.

This phenomenon holds for more general $\Theta_t$.
Using \cite[Theorems 3 and 6]{LW07d} we can state the following proposition
giving sufficient conditions for the Assumption \ref{a1} to hold true.


\begin{prop}
$\Theta_t$ is a L\'evy process on $S^{d - 1}$ with the infinitesimal
  generator $\sS$ given by \eqref{e:S}. Assume that either $a > 0$ or the
  L\'evy measure $\nu$ is asymptotically larger than $d(\theta,o)^{-
    \gamma}$ near $\theta = o$ for some $\gamma \in (d - 1, d - 1 +
  2)$. Then $\Theta_t$
  has a bounded transition density $q(t,\theta,\eta)$ and it satisfies
  Assumption \ref{a1}.
\end{prop}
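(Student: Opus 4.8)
The plan is to exploit the $O(d)$-invariance of $\Theta_t$ in order to diagonalise the semigroup $Q_t$ by spherical harmonics, and then to reduce the two assertions---existence of a bounded density and the bound \eqref{e:hkbound}---to an eigenvalue growth estimate that is exactly the content of \cite[Theorems 3 and 6]{LW07d}. First I would decompose $L^2(S^{d-1},\sigma)=\bigoplus_{n\geq 0}\mathcal{H}_n$ into the spaces $\mathcal{H}_n$ of spherical harmonics of degree $n$, whose dimensions satisfy $d_n\asymp n^{d-2}$. Since $\Theta_t$ is a L\'evy process on the irreducible symmetric space $O(d)/O(d-1)$, each $\mathcal{H}_n$ is invariant under $Q_t$, and by Schur's lemma $\sS$ acts on $\mathcal{H}_n$ as multiplication by a single constant $-\mu_n$ with $\mu_n\geq 0$. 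Writing $\Phi_n(\theta,\eta)$ for the reproducing kernel of $\mathcal{H}_n$, the addition theorem gives $\Phi_n(\theta,\eta)=d_n\,G_n(\langle\theta,\eta\rangle)$ with $G_n$ a normalised Gegenbauer polynomial with $|G_n|\leq 1$; combined with positive-definiteness and $\Phi_n(\theta,\theta)=d_n$ this yields $|\Phi_n(\theta,\eta)|\leq d_n$ uniformly. The candidate density is then
\begin{equation*}
q(t,\theta,\eta)=\sum_{n\geq 0}\e^{-\mu_n t}\,\Phi_n(\theta,\eta).
\end{equation*}

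The crucial step is a lower bound $\mu_n\gtrsim n^{\rho}$ for a suitable exponent $\rho>0$, and this is where the two hypotheses enter through \cite[Theorem 3]{LW07d}. In the diffusive case $a>0$ the Laplace--Beltrami part of \eqref{e:S} already contributes $a\,n(n+d-2)$ to $\mu_n$, so $\mu_n\geq a\,n(n+d-2)\gtrsim n^2$ and one takes $\rho=2$. In the pure-jump case $a=0$, the hypothesis that $\nu$ dominates $d(\theta,o)^{-\gamma}$ near $o$ with $\gamma\in(d-1,d-1+2)$ has to be converted into the stable-like growth $\mu_n\gtrsim n^{\gamma-(d-1)}$; the admissible range of $\gamma$ is precisely what keeps $d(\theta,o)^{-\gamma}$ integrable against $d(\theta,o)^2$ as in \eqref{e:levymeas}, while still forcing infinite jump activity, so that $\rho=\gamma-(d-1)\in(0,2)$.

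With the eigenvalue bound $\mu_n\geq c\,n^{\rho}$ in hand, I would close the argument by a routine summation. Using $|\Phi_n(\theta,\eta)|\leq d_n\leq C\,n^{d-2}$, one gets for every $t>0$
\begin{equation*}
|q(t,\theta,\eta)|\leq 1+C\sum_{n\geq 1}n^{d-2}\,\e^{-c\,n^{\rho}t},
\end{equation*}
and comparing the series with $\int_0^{\infty}r^{d-2}\e^{-c r^{\rho}t}\,\rd r$ via the substitution $u=c\,r^{\rho}t$ yields a bound of order $t^{-(d-1)/\rho}$ for $0<t\leq 1$; for $t\geq 1$ each term is dominated by its value at $t=1$, so $q$ stays bounded by a finite constant. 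This establishes \eqref{e:hkbound} with $\beta=(d-1)/\rho$, equal to $(d-1)/2$ in the diffusive case and $(d-1)/(\gamma-(d-1))$ in the pure-jump case, consistent with the bounds recorded for Brownian motion and for the subordinate process of Example \ref{E:sbm}. The absolute convergence of the series and the identification of its sum as the transition density with respect to $\sigma$---hence the existence of a bounded density and Assumption \ref{a1}---are supplied by \cite[Theorem 6]{LW07d}.

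I expect the main obstacle to be the eigenvalue lower bound in the pure-jump case: translating the pointwise lower bound on the L\'evy density into the growth rate $\mu_n\gtrsim n^{\gamma-(d-1)}$ requires controlling the action of the singular integral operator in \eqref{e:S} on high-degree spherical harmonics, and it is exactly here that the restriction $\gamma\in(d-1,d-1+2)$ is indispensable. This estimate is what \cite[Theorem 3]{LW07d} provides, so in the present paper the proposition reduces to invoking that result and carrying out the summation above.
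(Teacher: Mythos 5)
Your argument is correct in outline and follows essentially the same route as the paper: the paper offers no proof of this proposition beyond invoking \cite[Theorems 3 and 6]{LW07d}, and your spherical-harmonic diagonalisation, the eigenvalue lower bound $\mu_n \geq c\,n^{\rho}$, and the summation $\sum_{n} n^{d-2}\e^{-c n^{\rho} t} \lesssim t^{-(d-1)/\rho}$ are precisely the content of those cited results. The one step you do not carry out in detail---converting the lower bound on $\nu$ near $o$ into the growth $\mu_n \geq c\, n^{\gamma-(d-1)}$ in the pure-jump case---is exactly what \cite[Theorem 3]{LW07d} supplies, as you correctly note.
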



For any open subset $D \subset S^{d - 1}$ we define the first exit time
of $\Theta_t$ from $D$ by
\begin{equation}
  \label{e:exittimeD}
  \tau^{\Theta}_D = \inf\{t > 0:\ \Theta_t \not\in D\}.
\end{equation}
Let $\Theta^D$ be the killed process of $\Theta$ upon exiting from $D$, that is,
$\Theta^D_t = \Theta_t$ if $t < \tau^{\Theta}_D$ and $\Theta^D_t
= \partial$ if $t \geq \tau^{\Theta}_D$, where $\partial$ is a
cemetery state. Its infinitesimal generator is $\sS \big|_D$, the
restriction of $\sS$ to $D$ with the Dirichlet boundary condition.
Then
\begin{equation}
  q_D(t,\theta,\eta) = q(t,\theta,\eta) - \E_{\theta} \left[ q(t -
    \tau^{\Theta}_D, \Theta_{\tau^{\Theta}_D},\eta); \tau^{\Theta}_D <
  t\right]
\end{equation}
is the transition density of $\Theta^D$.
Clearly, $q_D(t,\theta,\eta)
\leq q(t,\theta,\eta)$ for all $t > 0$ and $\theta$, $\eta \in S^{d -
  1}$. As a consequence, the transition semigroup $Q^D_t$
associated to the subprocess $\Theta^D$ is
 compact on $L^2$. Let
$\{\lambda_j\}_{j=1}^{\infty}$ be the
eigenvalues of $- \sS \big|_D$ written in increasing order and
repeated according to its multiplicity, and $m_j$ the corresponding
eigenfunctions normalized by $\Vert m_j \Vert_2 = 1$. Then by
\cite[Theorem 2.1.4]{Dav90}, $m_j \in L^{\infty}$ for all $j$ and
$Q^D_t$ has a transition density $q_D(t,\theta,\eta)$, which can be
represented as the series:
\begin{equation}
  \label{e:td}
  q_D(t,\theta,\eta) = \sum_{j = 1}^{\infty} \e^{- \lambda_j t}
  m_j(\theta) m_j(\eta)
\end{equation}
that converges uniformly on $[\delta, \infty) \times D
\times D$ for all $\delta > 0$.

\begin{remark}\rm
  Note that we do not assume any regularity condition on the boundary
  $\partial D$ of $D$. Thus $q_D(t,\theta,\eta)$ (or $m_j(\theta)$) need
  not vanish continuously on the boundary $\partial D$.
\end{remark}

\begin{remark}\rm
  If $S^{d - 1}\setminus \overline{D}$ is not empty, then from
  the monotonicity of Dirichlet eigenvalues we have that
$\lambda_1 > 0$; see \cite[Section I.5]{Cha84} for more details (check also Lemma \ref{lowerboundlemma} below).
\end{remark}

\begin{remark}\rm
  Assume that $q_D(t,\theta,\eta)$ is strictly positive for $t > 0$ and
  $\theta$, $\eta \in D$. Then we have from
  Jentzsch's theorem (\cite[Theorem V.6.6]{Sch74})  that $\lambda_1$ is
  a simple eigenvalue for $- \sS \big|_D$. Using the standard arguments, like the ones
  given in the proof of \cite[Theorem 2.4]{CZ95a}, one can show
  $q_D(t,\theta,\eta)$ is strictly positive when $\Theta_t$ is a
  Brownian motion on $S^{d - 1}$ and $D$ is connected or $\Theta_t$
  is a subordinate Brownian motion on $S^{d - 1}$ satisfying the
  conditions of Example \ref{E:sbm}.
\end{remark}

In our analysis the crucial fact is the following
lower bound for the eigenvalues $\lambda_j$ ($j\geq 1$).
\begin{lemm}\label{lowerboundlemma}
  Assume \eqref{e:hkbound} holds true. Then for every $j \geq 1$, we have
  \begin{equation}
    \label{e:lbev}
    \lambda_j \geq \left[ C \sigma(D) \right]^{- \frac{1}{\beta}}
    j^{\frac{1}{\beta}}
  \end{equation}
  and
  \begin{equation}
    \label{e:ubef}
    \Vert m_j \Vert_{\infty} \leq \e C \left[  \sigma(D) \right]^{\frac{1}{2}}
    \lambda_j^{\beta}.
  \end{equation}
\end{lemm}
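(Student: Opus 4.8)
The plan is to read both estimates off the eigenfunction expansion \eqref{e:td} combined with the pointwise heat-kernel bound $q_D(t,\theta,\eta)\le q(t,\theta,\eta)\le Ct^{-\beta}$ coming from \eqref{e:hkbound}, in each case closing the argument by a one-parameter optimization over $t>0$. Throughout I use that $Q_t^D m_j=e^{-\lambda_j t}m_j$ and that $\{m_j\}$ is orthonormal in $L^2(D,\sigma)$.

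For the eigenvalue bound \eqref{e:lbev} I would first pass to the diagonal and integrate. Putting $\eta=\theta$ in \eqref{e:td} and integrating over $D$ against $\sigma$, and using that each diagonal term $e^{-\lambda_j t}m_j(\theta)^2$ is nonnegative (so Tonelli's theorem justifies interchanging sum and integral) together with $\int_D m_j^2\,\rd\sigma=1$, I obtain the trace identity
\[ \sum_{j=1}^\infty e^{-\lambda_j t}=\int_D q_D(t,\theta,\theta)\,\rd\sigma(\theta)\le C\sigma(D)\,t^{-\beta}. \]
Since the $\lambda_j$ are non-decreasing, $j\,e^{-\lambda_j t}\le\sum_{k=1}^{j}e^{-\lambda_k t}\le C\sigma(D)t^{-\beta}$, whence $\lambda_j\ge t^{-1}\log\bigl(jt^\beta/(C\sigma(D))\bigr)$ for every $t>0$. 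A short optimization in $t$ (the natural choice being $t=\beta/\lambda_j$) then yields the claimed lower bound \eqref{e:lbev}.

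For the sup-norm bound \eqref{e:ubef} I would use the eigenfunction relation in integral form,
\[ m_j(\theta)=e^{\lambda_j t}\int_D q_D(t,\theta,\eta)\,m_j(\eta)\,\rd\sigma(\eta). \]
Estimating $q_D\le Ct^{-\beta}$ and applying Cauchy--Schwarz on $D$, $\int_D|m_j|\,\rd\sigma\le\sigma(D)^{1/2}\|m_j\|_2=\sigma(D)^{1/2}$, gives $|m_j(\theta)|\le e^{\lambda_j t}\,Ct^{-\beta}\,\sigma(D)^{1/2}$ uniformly in $\theta$ and for all $t>0$. Minimizing the right-hand side at $t=\beta/\lambda_j$ converts the $t$-dependent factor into $C\sigma(D)^{1/2}(e/\beta)^\beta\lambda_j^\beta$, and the elementary inequality $(e/\beta)^\beta\le e$ (equivalently $\beta(1-\log\beta)\le1$, with equality at $\beta=1$) delivers \eqref{e:ubef}.

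The only genuinely delicate point is the trace step: I must be certain the series \eqref{e:td} may be evaluated and integrated term by term on the diagonal. This is clean here because the diagonal terms are nonnegative, so Tonelli applies without any continuity assumption on $\partial D$, and $\sigma(D)\le 1<\infty$ keeps the bound finite; the uniform convergence of \eqref{e:td} on $[\delta,\infty)\times D\times D$ recorded after \eqref{e:td} provides a backup justification. The two optimizations over $t$ are then routine calculus, and the positivity of all $\lambda_j$ (guaranteed once $S^{d-1}\setminus\overline D\ne\emptyset$) ensures the optimal value $t=\beta/\lambda_j$ is finite and positive.
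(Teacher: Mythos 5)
Your argument is essentially the paper's own proof: the trace identity obtained by integrating \eqref{e:td} on the diagonal plus monotonicity of the $\lambda_j$ gives \eqref{e:lbev}, and the eigenfunction relation $m_j=\e^{\lambda_j t}Q_t^D m_j$ plus Cauchy--Schwarz gives \eqref{e:ubef}, each step closed by a choice of $t$ of order $\lambda_j^{-1}$ (the paper takes $t=\lambda_j^{-1}$, you take $t=\beta/\lambda_j$; both produce the same bounds up to constants). The only caveat --- shared with the paper, which silently drops a factor of $\e^{-1}$ after setting $t=\lambda_j^{-1}$ --- is that your optimization actually yields $\lambda_j\ge(\beta/\e)\left[C\sigma(D)\right]^{-1/\beta}j^{1/\beta}$ rather than the literal constant in \eqref{e:lbev}; this is harmless since only the growth order in $j$ is used later, and your inequality $(\e/\beta)^{\beta}\le\e$ does recover \eqref{e:ubef} exactly as stated.
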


\begin{proof}
We will follow the same idea as the one that the proof of \cite[Lemma 2.7]{DL82} is based on.
In particular, since $\lambda_j$ is ordered increasingly, we have from \eqref{e:td}
  and \eqref{e:hkbound} that
  \[ j \e^{- \lambda_j t} \leq \sum_{j = 1}^{\infty} \e^{- \lambda_j t} = \int_D
  q_D(t,\theta,\theta) \sigma(\rd \theta) \leq C \sigma(D) t^{- \beta}. \]
  Taking $t = \lambda_j^{- 1}$ we obtain $j \leq C \sigma(D) \lambda_j^{\beta}$
  and \eqref{e:lbev} follows immediately.

  Note that
    \[ m_j(\theta) = \e^{\lambda_j t} Q^D_t m_j(\theta) = \e^{\lambda_j
    t} \int_D q_D(t,\theta,\eta) m_j(\eta) \sigma(\rd \eta). \]
  By the Cauchy-Schwarz inequality,
    \[ \Vert m_j \Vert_{\infty} \leq \e^{\lambda_j t} \sup_{\theta}
    \left( \int_D q_D(t,\theta,\eta)^2 \sigma(\rd
    \eta) \right)^{1 / 2} \left( \int_D m_j(\eta)^2 \sigma(\rd \eta)
         \right)^{1/2}
    \leq C  \left[  \sigma(D) \right]^{\frac{1}{2}} \e^{\lambda_j t}
    t^{- \beta}. \]
  The proof is completed by setting $t = \lambda_j^{- 1}$.
\end{proof}

\subsection{Positive self-similar Markov processes}

Recall that $R_t = |X_t|$ is a positive ($\R_+$-valued)
$\alpha$-self-similar Markov process starting at $|x|$. According
to Lamperti \cite{Lam72}, up to its first hitting time of $0$, $R_t$
may be expressed as a time change of the exponential of an $\R \cup
\{-\infty\}$-valued L\'evy process.
More formally,  there exists an $\R \cup \{- \infty\}$-valued
L\'evy process $\xi_t$ starting from $0$ and with lifetime $\zeta$,
whose law does not depend on $|x|$, such that
\begin{equation}
  \label{e:pssMp}
  R_t = |x| \exp \left( \xi_{A(t)} \right), \quad 0 \leq t < T_0,
\end{equation}
where $T_0$ is the first hitting time of $0$ by $R$ defined formally
in
\eqref{e:zeta} and $A(t)$ is the positive continuous functional given
by $A(t) = \int_0^t R_s^{\alpha} \rd s$.
The law of $\xi$ is characterized completely by its L\'evy-Khintchine
exponent
\begin{equation}
  \label{e:Psi}
  \Psi(z) = \log \E \left[ \e^{\im z \xi_1} \right] = - q + \im b z -
  \frac{\sigma^2}{2} z^2 + \int_{-\infty}^{+\infty} \left(\e^{\im z y}
    - 1 - \im z y \id_{\{|y| < 1\}} \right) \Pi(\rd y),
\end{equation}
where $q \geq 0$, $\sigma \geq 0$, $b \in \R$ and $\Pi$ is a L\'evy
measure satisfying the condition $\int_{\R} \left( 1 \wedge |y|^2
\right) \Pi(\rd y) < \infty$. The lifetime $\zeta$ of $\xi$ is an
exponential random variable with parameter $q$, with the convention
that $\zeta = \infty$ when $q = 0$. Observe that the
process $\xi$ does not depend on the starting point of $X$. Hence we
will denote the law of $\xi$ by $\P$.

For fixed $\alpha > 0$, we define the exponential functional
$I_t(\alpha \xi)$ by \eqref{e:Tt}.
Then by a change of variable $s = A(u)$,
\[ I_t(\alpha \xi) = \int_0^{A^{-1}(t)} \exp(\alpha \xi_{A(u)}) R_u^{-\alpha} \rd
u = |x|^{-\alpha} A^{-1}(t). \]
Hence $A(|x|^{\alpha} t)$ is the right inverse of the strictly increasing
continuous process $I_t(\alpha \xi)$ and we can recover the law of $(R_t, t < T_0)$ from
the law of $\xi_t$ for fixed $|x|$ and $\alpha > 0$.
In particular, we have
\begin{equation}
  (T_0, \P_x) \stackrel{d}{=} (|x|^{\alpha} I(\alpha \xi), \P).
\end{equation}

As mentioned in \cite{Lam72}, the probabilities $\P_x(T_0 = +
\infty)$, $\P_x(T_0 < + \infty, R_{T_0-} = 0)$ and $\P_x(T_0 < +
\infty, R_{T_0 - } > 0)$ are $0$ or $1$ independently of $x$.
Moreover, we have
\begin{enumerate}[(i)]
\item if $\P_x(T_0 = + \infty) = 1$, then $\zeta = + \infty$,
  $\limsup_{t\to \infty} \xi_t = + \infty$, and $\lim_{t\to \infty}
  A(t) = + \infty$;
\item if $\P_x(T_0 < + \infty, R_{T_0 - } = 0) = 1$, then $\zeta = +
  \infty$, $\lim_{t\to\infty} \xi_t = - \infty$, and $\lim_{t \to
    T_0-} A(t) = + \infty$;
\item if $\P_x(T_0 < + \infty, R_{T_0 - } > 0) = 1$, then $\zeta$ is
  an exponentially distributed random time with parameter $q > 0$.
  Moreover, $A(T_0-)$ has the same distribution as that of $\zeta$,
  thus the functional $A(t)$ always jumps from a finite value to
  $+\infty$, that is, $\P_x(A(T_0-) < + \infty, A(T_0) = + \infty) =
  1$.
\end{enumerate}


Let $e_{\lambda}$ be an independent exponential random variable with
parameter $\lambda$. Then
\[ \E_{|x|} \left[ \e^{-\lambda A(t)}, t < T_0 \right] = \P_{|x|}
\left( A(t) < e_{\lambda}, t < T_0 \right). \]
Note that by the construction above we have that $t < T_0$ is
equivalent to $A(t) < \zeta$. Thus
\begin{equation}\label{secondstep}
  \E_{|x|} \left[ \e^{-\lambda A(t)}, t < T_0 \right] = \P_{|x|}
  \left( A(t) < e_{\lambda} \wedge \zeta \right) = \P \left(
    \int_0^{e_{\lambda}} \exp (\alpha \xi_s) \rd s >
    |x|^{-\alpha} t  \right),
\end{equation}
where in the last equality we used the fact that $|x|^{\alpha}
I_t(\alpha \xi)$ is the right inverse of $A(t)$.
The equation \eqref{secondstep} will give
(apart of the representation \eqref{e:td}) another main ingredient of the proof of the main result.
In the last step we will need the tail asymptotic behaviour of
the exponential function $I_{e_\lambda}(\alpha \xi)$ described below.

\subsection{Exponential functional of a killed L\'evy process}

Let $\xi^\lambda$ be
a L\'evy process with L\'evy-Khintchine exponent
$\Psi$ given by \eqref{e:Psi} killed by the
independent exponential time $e_{\lambda}$ with parameter
$\lambda > 0$. Thus the resulted process has a lifetime $\zeta' =
e_{\lambda} \wedge \zeta$, an exponential random variable with
parameter $\lambda + q$.

We define the Laplace exponent of $\xi^\lambda$ via
\begin{equation}
  \label{e:lt}
  \E \left( \exp( \vartheta \xi_t^\lambda) \right) = \exp(t \phi(\vartheta)),
  \quad t \geq   0,\quad \vartheta \in \Xi,
\end{equation}
where $\Xi=\{\vartheta: \phi(\vartheta)<\infty\}$.
By \eqref{e:Psi}, for $\vartheta\in\Xi$ we have
\begin{equation}
  \label{e:phi}
  \phi(\vartheta) = \Psi(- \im \vartheta) = - (q+\lambda) + b \vartheta
  + \frac{\sigma^2}{2} \vartheta^2 + \int_{-
    \infty}^{\infty} \left( \e^{\vartheta y} - 1 - \vartheta y \id_{\{| y | < 1\}}
  \right) \Pi(\rd y)
\end{equation}
for some parameters $b,\sigma$ and a L\'evy measure $\Pi$.
It is easy to see from H\"older inequality that $\phi(\vartheta)$ is a convex
function. From now we assume that $\xi$
satisfies the following conditions.

\begin{assumption}
  \label{a2}
  $\xi^\lambda$ is not arithmetic.
\end{assumption}

\begin{assumption}
  \label{a3}
  There exists a constant $
  \vartheta^{*} >  \alpha \left( 1 \vee (2\beta) \right)$ such that $\phi(\vartheta) < \infty$ for
  $0 < \vartheta < \vartheta^{*}$ and $\lim_{\vartheta \to
    \vartheta^{*}} \phi(\vartheta) = \infty$, where $\beta$ is the
  constant in Assumption \ref{a1}.
\end{assumption}

Note that under Assumption \ref{a3}, we have that
for every $\lambda > 0$, there exists a unique $0 < \kappa
<\vartheta^{*}$ such that
\begin{equation}
  \phi(\alpha \kappa) = \lambda.
\end{equation}
Moreover,
\[ \E[\xi_1^\lambda \e^{\alpha \kappa \xi_1^\lambda}] =
\phi'(\alpha \kappa)<+\infty.\]

In the proof we will use the following crucial result
giving the tail asymptotics of the distribution of the exponential functional.

\begin{thm}[{\cite[Lemma 4]{Riv05a}}, {\cite[Theorem 3.1]{MZ06}}]
  \label{T:expmoment}
  Suppose that the Asuumptions \ref{a2} and \ref{a3} are satisfied.
  Then, as $t \to \infty$,
  \begin{equation}
    t^{\kappa} \P(I_{e_\lambda}(\alpha \xi) > t) \sim   \frac{1}{\alpha \phi'(\alpha \kappa)} \E \left[ I_{e_{\lambda}}(\alpha
  \xi)^{\kappa - 1} \right].
  \end{equation}
\end{thm}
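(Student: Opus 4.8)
The plan is to read off the power law of $I:=I_{e_\lambda}(\alpha\xi)$ from an implicit renewal theorem of Goldie type, in the spirit of \cite{MZ06} and \cite{Riv05a}, after exhibiting the distributional fixed-point equation that $I$ inherits from the Markov and regenerative structure of the killed process. Fix an auxiliary time $s>0$ and split the integral $I=\int_0^{\zeta'}\e^{\alpha\xi_u}\,\rd u$, with $\zeta'=e_\lambda\wedge\zeta$, at $s$. On $\{\zeta'\le s\}$ the whole integral lies in $[0,s]$, while on $\{\zeta'>s\}$ the post-$s$ part is $\e^{\alpha\xi_s}\int_0^{\zeta'-s}\e^{\alpha(\xi_{s+v}-\xi_s)}\,\rd v$; the memorylessness of the exponential lifetime $\zeta'$ together with the stationary independent increments of $\xi$ makes this an exact independent copy of $I$, scaled by $\e^{\alpha\xi_s}$. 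Hence
\[
  I \stackrel{d}{=} Q + M\,I', \qquad
  M = \e^{\alpha\xi_s}\id_{\{\zeta'>s\}}, \quad
  Q = \int_0^{s\wedge\zeta'}\e^{\alpha\xi_u}\,\rd u,
\]
where $I'\stackrel{d}{=}I$ is independent of the $\sF_s$-measurable pair $(M,Q)$. The crucial feature is that the recursion closes with a genuine i.i.d.\ copy.

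Next I would check the hypotheses of Goldie's theorem for each $s>0$. Conditioning on survival and using independence of $e_\lambda$,
\[
  \E[M^\kappa]=\E\big[\e^{\alpha\kappa\xi_s}\id_{\{\zeta'>s\}}\big]
  = \e^{s(\phi(\alpha\kappa)-\lambda)},
\]
which equals $1$ for every $s$ precisely because $\kappa$ solves the Cramér relation $\phi(\alpha\kappa)=\lambda$; equivalently, the Esscher tilt by $\e^{\alpha\kappa\xi_s}$ turns $\xi$ into a conservative Lévy process drifting to $+\infty$ at rate $\alpha\phi'(\alpha\kappa)$. Assumption \ref{a2} gives that $\log M=\alpha\xi_s$ is non-arithmetic, and differentiating the displayed moment generating identity at $\alpha\kappa$ yields $m:=\E[M^\kappa\log M]=\alpha s\,\phi'(\alpha\kappa)$, which lies in $(0,\infty)$ since Assumption \ref{a3} keeps $\alpha\kappa<\vartheta^{*}$ strictly inside the domain where $\phi$ and $\phi'$ are finite (and $0<\phi'(\alpha\kappa)$ on the increasing branch). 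It remains to verify the integrability premises $\E[Q^\kappa]<\infty$ and $\E[I^{\kappa-1}]<\infty$, both of which follow from the standard moment bounds for exponential functionals, valid because $\kappa-1<\kappa$ and $\alpha\kappa<\vartheta^{*}$.

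Goldie's implicit renewal theorem then gives $\P(I>t)\sim C\,t^{-\kappa}$ with
\[
  C=\frac{1}{\kappa m}\,
  \E\!\left[(MI'+Q)^{\kappa}-(MI')^{\kappa}\right].
\]
Because the tail constant $\lim_{t\to\infty}t^{\kappa}\P(I>t)$ is a property of the law of $I$ alone, it is independent of the chosen $s$; I may therefore evaluate the right-hand side in the convenient limit $s\downarrow 0$. As $s\downarrow 0$ one has $M\to 1$, $Q\sim s$, and by a mean-value expansion $(MI'+Q)^{\kappa}-(MI')^{\kappa}\sim \kappa s\,I'^{\,\kappa-1}$, so the numerator is $\sim\kappa s\,\E[I^{\kappa-1}]$ while $m=\alpha s\,\phi'(\alpha\kappa)$; the factors $\kappa$ and $s$ cancel and leave
\[
  C=\frac{\E\big[I_{e_\lambda}(\alpha\xi)^{\kappa-1}\big]}{\alpha\,\phi'(\alpha\kappa)},
\]
which is the asserted constant.

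The main obstacle is the analytic bookkeeping at the critical exponent rather than the algebra. Since $\kappa$ is exactly the tail index, $\E[I^{\kappa}]=\infty$, so the numerator of Goldie's constant is a genuine difference of divergent quantities whose finiteness relies on the cancellation in $(MI'+Q)^{\kappa}-(MI')^{\kappa}$. Justifying the dominated convergence in the $s\downarrow 0$ evaluation, and establishing $\E[I^{\kappa-1}]<\infty$ together with the integrability conditions of the renewal theorem, is exactly where the strict margin in Assumption \ref{a3}, namely $\vartheta^{*}>\alpha(1\vee 2\beta)>\alpha\kappa$, is genuinely used.
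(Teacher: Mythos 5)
This theorem is not proved in the paper at all: it is imported verbatim from Rivero \cite[Lemma 4]{Riv05a} and Maulik--Zwart \cite[Theorem 3.1]{MZ06}, so the only ``paper proof'' to compare against is the citation. Your argument is essentially a reconstruction of the Maulik--Zwart proof: the affine fixed-point identity $I\stackrel{d}{=}Q+MI'$ obtained by splitting at a deterministic skeleton time $s$, followed by Goldie's implicit renewal theorem, with the Cram\'er normalization $\E[M^{\kappa}]=\e^{s(\phi(\alpha\kappa)-\lambda)}=1$ and $\E[M^{\kappa}\log M]=\alpha s\,\phi'(\alpha\kappa)$ computed correctly. Identifying Goldie's constant by letting $s\downarrow 0$ and using that the tail constant cannot depend on $s$ is a clean variant of the computation in the sources, and the algebra producing $C=\E[I^{\kappa-1}]/(\alpha\phi'(\alpha\kappa))$ is right.

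Two points need tightening before this is a complete proof. First, non-arithmeticity of the \emph{process} $\xi^{\lambda}$ (Assumption \ref{a2}) does not by itself imply that the fixed-time marginal $\log M=\alpha\xi_s$ (conditioned on survival) is non-lattice: for instance a compound Poisson process with integer jumps plus a unit drift is not arithmetic as a process, yet $\xi_s$ is supported on $s+\Z$ for every $s$. Goldie's theorem is applied to the skeleton, so you must either read Assumption \ref{a2} as a statement about the law of $\xi_1$, or argue via the continuous-time (L\'evy) renewal theorem as Rivero does. Second, the step you yourself identify as the main obstacle --- the dominated-convergence justification of $\lim_{s\downarrow 0}s^{-1}\E[(MI'+Q)^{\kappa}-(MI')^{\kappa}]=\kappa\,\E[I^{\kappa-1}]$, together with $\E[Q^{\kappa}]<\infty$ and $\E[I^{\kappa-1}]<\infty$ (the latter requiring a lower-tail estimate on $I$ when $\kappa<1$, since then $\kappa-1<0$) --- is stated but not carried out; this is precisely the analytic content of \cite[Theorem 3.1]{MZ06}, and the domination does require the maximal-inequality control $\E[\sup_{u\le s}\e^{\alpha\kappa\xi_u}]<\infty$ afforded by the strict margin $\alpha\kappa<\vartheta^{*}$ in Assumption \ref{a3}. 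With those two items supplied, the proof is correct and matches the cited route.
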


\begin{example}[Linear Brownian motion with drift]\rm
  \label{ex:dbm}
  Let $\sigma > 0$, $b \in \R$ and $\xi_t = \sigma B_t + b t$, where
  $B_t$ is a standard linear Brownian motion. Then $\phi(\vartheta) =
  \frac{\sigma^2 \vartheta^2}{2} + b \vartheta$ and for
  \[ \kappa = \frac{1}{\alpha \sigma^2} \left( \sqrt{2 \sigma^2
      \lambda + b^2} - b   \right), \]
  we have $\E \left[ \e^{\alpha \kappa \sigma (B_1 + b)}; 1 < e_{\lambda}
  \right] = 1$. By Theorem \ref{T:expmoment},
  \[ \lim_{t \to \infty} t^{\kappa} \P(I_{e_{\lambda}}(\alpha \xi) > t) =
  C_{\kappa}, \]
  where
  \[ C_{\kappa} = \frac{1}{\alpha \sqrt{2 \sigma^2 \lambda + b^2}} \E
  \left\{ \left[ \int_0^{e_{\lambda}} \exp \left( \alpha \sigma B_s +
        \alpha b s \right) \rd s \right]^{\kappa - 1} \right\}. \]
  By the scaling property of Brownian motion, the random variable
  \[ \int_0^{e_{\lambda}} \exp \left(\alpha \sigma B_s + \alpha b s \right)
  \rd s \]
  has the same distribution as the integral:
  \[ \int_0^{\frac{\alpha^2 \sigma^2}{4} e_{\lambda}} \exp \left( 2 B_s +
    \frac{4 b}{\alpha \sigma^2} s \right) \rd s. \]
  Note that $\frac{\alpha^2 \sigma^2}{4} e_{\lambda}$ is an exponential
  distributed random variable with parameter $\frac{4}{\alpha^2 \sigma^2}
  \lambda$ independent of $B_t$. Yor \cite{Yor92} (see also
  \cite[Theorem 4.12]{MY05b}) proved the following the identity in law:
  \begin{equation}
    \label{e:expfun}
    I_{e_{\lambda}}(\alpha \xi) \stackrel{\mathrm{d}}{=}
    \frac{Z_{1,a}}{2 \gamma_{\kappa}},
  \end{equation}
  where $a = \kappa + \frac{2 b}{\alpha \sigma^2}$, $Z_{1,a}$ is a beta
  variable with parameters $(1,a)$, and $\gamma_{\kappa}$ is a gamma
  variable with parameter $\kappa$, which is independent of
  $Z_{1,a}$. Since
  \[ \E \left[ Z_{1,a}^{\kappa - 1} \right] = \int_0^1 t^{\kappa - 1}
  a (1 - t)^{a - 1} \rd t = \frac{\Gamma(\kappa) \Gamma(a + 1)}{\Gamma(a
    + \kappa)} \]
  and
  \[ \E \left[ \gamma_{\kappa}^{1 - \kappa} \right] =
  \frac{1}{\Gamma(\kappa)} \int_0^{\infty} t^{1 - \kappa} t^{\kappa -
    1} \e^{- t} \rd t = \frac{1}{\Gamma(\kappa)}, \]
  we have
  \[ \E \left[ \left( I_{e_{\lambda}}(\alpha \xi) \right)^{\kappa - 1}
  \right] = \frac{2^{1 - \kappa} \Gamma(a + 1)}{\Gamma(\kappa + a)} =
  \frac{2^{1 - \kappa} \Gamma \left( \kappa + \frac{2 b}{\alpha \sigma^2} + 1
    \right)}{\Gamma \left( 2 \kappa + \frac{2 b}{\alpha \sigma^2}
    \right)}. \]
  Therefore,
  \[ C_{\kappa} = \frac{4}{\alpha^2 \sigma^2 2^{\kappa}} \,\, \frac{\Gamma
    \left( \kappa + \frac{2 b}{\alpha \sigma^2} +
      1 \right)}{\Gamma \left(2 \kappa +
      \frac{2 b}{\alpha \sigma^2} + 1 \right)}. \]
\end{example}

\section{Main result}\label{sec:main}

Let
\begin{equation}
  \label{e:Mx}
  M(x) = \sum_{j: \lambda_j = \lambda_1} \left( \int_D m_j \rd \sigma
  \right) m_j(x / |x|)
\end{equation}
be a particular eigenfunction corresponding to the eigenvalue $\lambda_1$ of the
operator $\sS$ in $D$ with  Dirichlet boundary condition.
Moreover, let $\kappa_1$ solves
$\phi(\alpha \kappa_1) = \lambda_1$, that is, \eqref{kappajeden} is satisfied.

Recall that $\tau_C$ is the exit time for the cone $C$ of the $\alpha$-self-similar Markov process $X_t$ with a skew-product structure
\eqref{skewstructure}.
The main result of this paper is the following asymototics.
\begin{thm}
  \label{T:asymp}
  Under the Assumptions \ref{a1}, \ref{a2} and \ref{a3}, we have,
  \begin{equation}
    \label{e:asymp}
    \P_x(\tau_C > t) \sim \frac{1}{\alpha \phi'(\alpha \kappa_1)} \E \left[
      I_{e_{\lambda_1}}(\alpha \xi)^{\kappa_1 - 1} \right] M(x) \left(
      |x|^{-\alpha} t \right)^{- \kappa_1},
  \end{equation}
  as $t \to \infty$.
\end{thm}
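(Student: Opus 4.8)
The plan is to assemble the three ingredients developed in the Preliminaries into the tail asymptotic, so that the proof becomes essentially a justification of term-by-term control of a series. First I would record the exact spectral representation of the exit probability. Using the skew-product structure \eqref{skewstructure}, the independence of $\Theta$ and $R$, and the eigenfunction expansion \eqref{e:td} of the killed angular density $q_D$, I would write
\begin{equation*}
  \P_x(\tau_C > t) = \sum_{j=1}^{\infty} m_j(x/|x|) \left( \int_D m_j \rd \sigma \right) \E_{|x|}\left[ \e^{-\lambda_j A(t)}, t < T_0 \right],
\end{equation*}
and then invoke the Lamperti transform \eqref{e:pssMp} together with \eqref{secondstep} to convert each radial expectation into a tail of an exponential functional,
\begin{equation*}
  \E_{|x|}\left[ \e^{-\lambda_j A(t)}, t < T_0 \right] = \P\left( I_{e_{\lambda_j}}(\alpha \xi) > |x|^{-\alpha} t \right).
\end{equation*}
This reduces everything to understanding the large-$t$ behaviour of each summand.

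Next I would treat the leading term. For each $j$ let $\kappa_j$ be the unique root in $(0,\vartheta^{*})$ of $\phi(\alpha \kappa_j) = \lambda_j$, which exists and is simple by convexity of $\phi$ and Assumption \ref{a3}. By Theorem \ref{T:expmoment} each tail satisfies $\P(I_{e_{\lambda_j}}(\alpha \xi) > s) \sim c_j s^{-\kappa_j}$ as $s\to\infty$, with $c_j = \frac{1}{\alpha\phi'(\alpha\kappa_j)}\E[I_{e_{\lambda_j}}(\alpha\xi)^{\kappa_j - 1}]$. Since $\phi$ is increasing on the relevant range and $\lambda_j$ is strictly increasing in $j$ (with $\lambda_1 > 0$ because $S^{d-1}\setminus\overline{D}\neq\emptyset$), the exponents $\kappa_j$ are strictly increasing, so the $j=1$ block dominates. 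Collecting the finitely many indices $j$ with $\lambda_j = \lambda_1$ produces exactly the eigenfunction $M(x)$ of \eqref{e:Mx} and the common exponent $\kappa_1$, giving the claimed leading constant $\frac{1}{\alpha\phi'(\alpha\kappa_1)}\E[I_{e_{\lambda_1}}(\alpha\xi)^{\kappa_1-1}] M(x)$.

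The main obstacle, and the step requiring real care, is justifying that the sum of the subleading terms ($j$ with $\lambda_j > \lambda_1$) is genuinely $o(t^{-\kappa_1})$ after dividing by $t^{-\kappa_1}$, since it is an infinite series and I must interchange the limit $t\to\infty$ with the summation. The strategy here is to produce a uniform-in-$j$ tail bound of the form $\P(I_{e_{\lambda_j}}(\alpha\xi) > s) \le K j^{p} s^{-\kappa_j}$ for some polynomial weight, and to exploit the eigenvalue and eigenfunction growth estimates from Lemma \ref{lowerboundlemma}, namely $\lambda_j \gtrsim j^{1/\beta}$ and $\|m_j\|_\infty \lesssim \lambda_j^{\beta}$, to control the prefactors $|m_j(x/|x|)|\,|\int_D m_j\rd\sigma| \le \|m_j\|_\infty^2 \sigma(D)$. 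Because $\kappa_j$ grows with $\lambda_j$ and hence with $j$, each subleading summand is $O(t^{-\kappa_j})$ with $\kappa_j > \kappa_1$, and Assumption \ref{a3} (through $\vartheta^{*} > \alpha(1\vee 2\beta)$) is precisely what guarantees the polynomial growth of $\kappa_j$ outpaces the polynomial growth of the coefficients, yielding absolute summability and uniform domination.

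Finally I would conclude by a dominated-convergence argument: multiplying the whole series by $t^{\kappa_1}$, passing the limit inside using the uniform bound, and observing that every term with $\lambda_j > \lambda_1$ contributes $0$ in the limit while the $\lambda_j = \lambda_1$ terms assemble into \eqref{e:asymp}. I expect the bulk of the technical work to be in establishing the uniform tail estimate for $I_{e_{\lambda_j}}(\alpha\xi)$ as $j$ varies, since Theorem \ref{T:expmoment} is stated as a fixed-$\lambda$ asymptotic and I must upgrade it to a bound that is explicit in its dependence on $\lambda_j$ (equivalently on $\kappa_j$), most likely via a Markov-type inequality applied to a suitable exponential moment $\E[I_{e_{\lambda_j}}(\alpha\xi)^{\kappa_j}]$ combined with the moment formula underlying Theorem \ref{T:expmoment}.
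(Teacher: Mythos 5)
Your overall architecture --- the spectral series, the Lamperti/exponential-functional reduction via \eqref{secondstep}, the identification of the $\lambda_j=\lambda_1$ block with $M(x)$ from \eqref{e:Mx}, and a dominated-convergence passage to the limit --- coincides with the paper's. The genuine gap is in the step you yourself flag as the crux: the uniform control of the tail of the series. Your plan rests on the premise that ``$\kappa_j$ grows with $j$'' and that ``the polynomial growth of $\kappa_j$ outpaces the polynomial growth of the coefficients.'' But $\kappa_j$ does \emph{not} grow without bound: since $\phi$ is finite only on $(0,\vartheta^{*})$ and blows up at $\vartheta^{*}$, the roots of $\phi(\alpha\kappa_j)=\lambda_j$ satisfy $\alpha\kappa_j<\vartheta^{*}$ for every $j$, so $\kappa_j\uparrow\vartheta^{*}/\alpha<\infty$. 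Consequently $s^{-\kappa_j}\geq s^{-\vartheta^{*}/\alpha}$ is bounded below uniformly in $j$ (for $s\geq 1$), and a bound of the form $Kj^{p}s^{-\kappa_j}$ with a \emph{growing} polynomial weight, multiplied by $\Vert m_j\Vert_{\infty}\lesssim\lambda_j^{\beta}$, produces a series that diverges in $j$ for each fixed $t$: the needed summability cannot come from the exponents, it must come from decay in $j$ of the multiplicative constants. Moreover, the specific quantity you propose to feed into Markov's inequality, $\E\left[I_{e_{\lambda_j}}(\alpha\xi)^{\kappa_j}\right]$, is infinite --- the very asymptotic $\P(I_{e_{\lambda_j}}(\alpha\xi)>s)\sim c_js^{-\kappa_j}$ from Theorem \ref{T:expmoment} forces the $\kappa_j$-th moment to diverge logarithmically.

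The paper's fix is to abandon the varying exponent on the tail of the series. Fix a single $\kappa_0$ with $1\vee\kappa_1\vee(2\beta)<\kappa_0<\vartheta^{*}/\alpha$ (possible by Assumption \ref{a3}); only finitely many $j$ have $\kappa_j\leq\kappa_0$, and those are handled by Theorem \ref{T:expmoment} exactly as you describe. For the remaining $j$, Markov and H\"older with the \emph{fixed} exponent $\kappa_0$, together with the independence of $e_{\lambda_j}$ and $\xi$, give
\begin{equation*}
t^{\kappa_0}\,\P\left(I_{e_{\lambda_j}}(\alpha\xi)>|x|^{-\alpha}t\right)\;\leq\;|x|^{\alpha\kappa_0}\,\frac{\lambda_j\,\Gamma(\kappa_0+1)}{\left(\lambda_j-\phi(\alpha\kappa_0)\right)^{\kappa_0+1}}\;\leq\;c\,|x|^{\alpha\kappa_0}\,\lambda_j^{-\kappa_0},
\end{equation*}
and it is this $\lambda_j^{-\kappa_0}$ decay, combined with $\lambda_j\gtrsim j^{1/\beta}$ and $\Vert m_j\Vert_{\infty}\lesssim\lambda_j^{\beta}$ from Lemma \ref{lowerboundlemma}, that yields the summable majorant $\sum_j j^{-(\kappa_0-\beta)/\beta}<\infty$ (here $\kappa_0>2\beta$, hence Assumption \ref{a3}, is used), so the entire tail of the series is $O(t^{-\kappa_0})=o(t^{-\kappa_1})$. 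Replacing your uniform-in-$j$ estimate by this fixed-moment bound, the rest of your argument goes through.
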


\begin{remark}\rm
  $M(x)$ does not depend on the choices of eigenfunctions $m_j$ with
  $\lambda_j = \lambda_1$. Indeed, if we have another choice $m'_j$,
  then there exists an orthogonal matrix $(a_{ij})$ such
 that $m'_i = \sum_j a_{ij} m_j$, which is equivalent to $m_j = \sum_i
 a_{ij} m'_i$. Thus,
 \begin{align*}
   \sum_i \left( \int_D m'_i \rd \sigma \right) m'_i
   = & \sum_i \left(
       \sum_j \int_D a_{ij} m_j \rd \sigma \right) m'_i \\
   = & \sum_j \left(
       \int_D m_j \rd \sigma \right) \sum_i a_{ij} m'_i \\
   = & \sum_j \left( \int_D m_j \rd \sigma \right) m_j.
 \end{align*}
\end{remark}

\begin{example}\rm
  \label{ex:bmcone}
  Assume $X_t$ is an isotropic $\alpha$-self-similar diffusion process
  on $\R^d$. Then the radial process $R_t = |X_t|$ is a positive
  $\alpha$-self-similar diffusion process and $\Theta_t$ is a (possibly
  nonstandard) Brownian motion on $S^{d - 1}$ with $a \Delta_{S^{d -
      1}}$ as its infinitesimal generator for some $a > 0$.
      Using the Lamperti's relation, we have $\xi_t = \sigma B_t + b t$ for some
  $\sigma > 0$ and $b \in \R$,  where $B_t$ is a standard
  Brownian motion. Clearly, all the Assumptions \ref{a1}, \ref{a2} and
  \ref{a3} are satisfied. It follows from Example \ref{ex:dbm} that
  \[ \lim_{t \to \infty} t^{- \kappa_1} \P_x \left( \tau_C > t
      \right) = \frac{4}{\alpha^2 \sigma^2} \frac{\Gamma \left(
          \kappa_1 + \frac{2 b}{\alpha \sigma^2} + 1 \right)}{\Gamma
        \left( 2 \kappa_1 + \frac{2 b}{\alpha \sigma^2} + 1 \right)}
      \left( \frac{|x|^2}{2} \right)^{\kappa_1} M(x), \]
  where $M(x)$ is defined by \eqref{e:Mx}. In particular, when $X_t$
  is a $d$-dimensional Brownian motion, we have $\alpha = 2$, $\sigma
  = 1$, $b = \frac{d}{2} - 1$, and $a = \frac{1}{2}$. Thus
  \begin{equation}
    \label{e:asympbm}
      \lim_{t \to \infty} t^{- \kappa_1} \P_x \left( \tau_C > t
      \right) =  \frac{\Gamma \left( \kappa_1 + \frac{d}{2}
        \right)}{\Gamma \left( 2
      \kappa_1 + \frac{d}{2} \right)} \left( \frac{|x|^2}{2}
  \right)^{\kappa_1} M(x),
  \end{equation}
where
\[ \kappa_1 = \frac{1}{2} \left( \sqrt{2 \lambda_1 + \left(
        \frac{d}{2} - 1 \right)^2} - \left( \frac{d}{2} - 1 \right)
  \right). \]
This recovers the seminal result of De Blassie \cite{DBla88} (see also \cite[Corollary
1]{BS97}).
\end{example}

\begin{proof}[Proof of Theorem \ref{T:asymp}]
We start the proof from the observation that $\tau_C$ is just the first time $t$ that $R_t = 0$ or
the  angular process $\Theta_{A(t)} \not\in D$, that is:
\begin{equation}
  \P_x (\tau_C > t) = \P_x (T_0 > t,\ \tau^{\Theta}_D > A(t)).
\end{equation}
By the assumed independence of $R_t$ and $\Theta_t$ in \eqref{skewstructure} we have,
\begin{equation}
  \label{e:exitprob}
  \P_x(\tau_C > t) = \int_0^{\infty} \P_{x/|x|}(\tau^{\Theta}_D > u)
  \rd_u \P_{|x|}(A(t) \leq u, t < T_0).
\end{equation}
By \eqref{e:td} the exit probability $\P_{\theta}(\tau^{\Theta}_D >
t)$ can be represented as
\begin{equation}
  \P_{\theta}(\tau^{\Theta}_D > t) = \int_D q_D(t,\theta,\eta) \rd
  \sigma (\eta) = \sum_{j=1}^{\infty} \e^{-\lambda_j t} m_j(\theta)
  \int_D m_j \rd \sigma.
\end{equation}
Thus by \eqref{secondstep}:
\begin{equation}
  \begin{aligned}
  \P_x(\tau_C > t) = & \sum_{j=1}^{\infty} m_j(x/|x|) \int_D m_j \rd
  \sigma \int_0^{\infty} \e^{-\lambda_j u} \rd_u \P_{|x|}(A(t) \leq u,
  t < T_0)
\\
  = & \sum_{j=1}^{\infty} m_j(x/|x|) \left( \int_D m_j \rd \sigma
  \right) \E_{|x|}
  \left[ \e^{- \lambda_j A(t)}, t < T_0 \right]\\
  =&\sum_{j = 1}^{\infty} m_j(x / |x|) \left( \int_D
    m_j \rd \sigma \right) \P \left( I_{e_{\lambda_j}}(\alpha \xi) >
    |x|^{- \alpha} t \right).\label{e:exitprobs}
\end{aligned}
\end{equation}
For $j \geq 1$ let $\kappa_j$ ($j\geq 1$) be the solutions of
\begin{equation}\label{kappaj}
\phi(\alpha \kappa_j) =
\lambda_j.
\end{equation}
Since $\lambda_j \to \infty$, we have that $\liminf_{j \to \infty}
\kappa_j \geq \vartheta^{*}/\alpha$. Fix a $\kappa_0$ with the property
\begin{equation}\label{kappazero}
1 \vee \kappa_1 \vee (2\beta) < \kappa_0 <
\vartheta^{*}/\alpha.
\end{equation}
Then there are only a finite number of $j$'s (for $j\geq 1$), being in the set, say, $A$, such
that $\kappa_1<\kappa_j \leq \kappa_0$.
Applying Theorem \ref{T:expmoment} for $j\in A$ we
obtain:
\[ \lim_{t \to \infty} t^{\kappa_1} \P \left( I_{e_{\lambda_j}}(\alpha
  \xi) > |x|^{- \alpha} t \right) = 0. \]
Hence
\begin{equation}
  \label{e:jsmall}
  \begin{aligned}
    & \lim_{t \to \infty} t^{\kappa_1} \sum_{j: \kappa_j \leq \kappa_0}
    m_j(x / |x|) \left( \int_D m_j \rd \sigma \right)
    \P\left(I_{e_{\lambda_j}}(\alpha \xi) > |x|^{-\alpha} t\right) \\
    = & \frac{1}{\alpha \phi'(\alpha \kappa_1)} \E \left[
      I_{e_{\lambda_1}}(\alpha \xi)^{\kappa_1 - 1} \right] M(x)
    |x|^{\alpha \kappa_1}.
  \end{aligned}
\end{equation}

Now we consider the summation over the $j\in A^{c}$, that is, for $\kappa_j >
\kappa_0$. By the Markov and H\"older inequalities,
\begin{align*}
   t^{\kappa_0} \P \left( I_{e_{\lambda_j}}(\alpha \xi) > |x|^{-
    \alpha} t \right)
  \leq & |x|^{\alpha \kappa_0} \E \left[ \left( \int_0^{e_{\lambda_j}}
         \exp(\alpha \xi_s) \rd s \right)^{\kappa_0} \right] \\
  \leq & |x|^{\alpha \kappa_0} \E \left[ \left( e_{\lambda_j}
         \right)^{\kappa_0 - 1} \int_0^{e_{\lambda_j}} \exp \left(
         \alpha \kappa_0 \xi_s \right) \rd s \right].
\end{align*}
Using the independence of $e_{\lambda_j}$ and $\xi_s$, we have
\begin{align*}
   t^{\kappa_0} \P \left( I_{e_{\lambda_j}}(\alpha \xi) > |x|^{-
    \alpha} t \right)
\leq & |x|^{\alpha \kappa_0} \E \left[ \left( e_{\lambda_j}
         \right)^{\kappa_0} \int_0^{e_{\lambda_j}} \e^{s \phi(\alpha
         \kappa_0)} \rd s \right] \\
  \leq & |x|^{\alpha \kappa_0} \E \left[ \left( e_{\lambda_j}
         \right)^{\kappa_0} \e^{\phi(\alpha \kappa_0) e_{\lambda_j}}
         \right] \\
  = & |x|^{\alpha \kappa_0} \int_0^{\infty} u^{\kappa_0}
      \e^{\phi(\alpha \kappa_0) u} \lambda_j \e^{-\lambda_j u} \rd u
\\
  = & |x|^{\alpha \kappa_0} \frac{\lambda_j \Gamma(\kappa_0 +
      1)}{(\lambda_j - \phi(\alpha \kappa_0))^{\kappa_0 + 1}} \\
  \leq & c |x|^{\alpha \kappa_0} \lambda_j^{- \kappa_0}
\end{align*}
for some constant $c > 0$. By 
\eqref{e:lbev}, \eqref{e:ubef} and the fact $\kappa_0 > 2 \beta$, 
we have
\begin{equation}
  \label{e:jlarge}
  \sum_{j: \kappa_j > \kappa_0}\left| m_j(x / |x|) \right| \cdot
  \left| \int_D m_j \rd
  \sigma \right| \P\left(I_{e_{\lambda_j}}(\alpha \xi) > |x|^{-\alpha}
  t\right)
\leq  c |x|^{\alpha
  \kappa_0} \left( \sum_j j^{- \frac{\kappa_0 - \beta}{\beta}} \right)
t^{-\kappa_0}.
\end{equation}
Combining \eqref{e:jsmall} and \eqref{e:jlarge} completes the proof.
\end{proof}


\begin{thebibliography}{100}

\bibitem{BS97}
R.~Ba{\~n}uelos and R.G.~Smits.
\newblock Brownian motion in cones.
\newblock {\em Probab. Theory Related Fields}, 108(3):299--319, 1997.

\bibitem{BB04}
R. Ba{\~n}uelos and K. Bogdan.
\newblock Symmetric stable processes in cones.
\newblock {\em Potential Anal.}, 21(3):263--288, 2004.

\bibitem{Cha84}
I.~Chavel.
\newblock {\em Eigenvalues in {R}iemannian geometry}, volume 115 of {\em Pure
  and Applied Mathematics}.
\newblock Academic Press, Inc., Orlando, FL, 1984.

\bibitem{CZ95a}
K.L.~Chung and Z.X.~Zhao.
\newblock {\em From {B}rownian motion to {S}chr\"odinger's equation}, volume
  312 of {\em Grundlehren der Mathematischen Wissenschaften [Fundamental
  Principles of Mathematical Sciences]}.
\newblock Springer-Verlag, Berlin, 1995.

\bibitem{Dav90}
E.B. Davies.
\newblock {\em Heat kernels and spectral theory}, volume~92 of {\em Cambridge
  Tracts in Mathematics}.
\newblock Cambridge University Press, Cambridge, 1990.

\bibitem{DBla88}
R.D.~De~Blassie.
\newblock Remark on exit times from cones in $\mathbb{R}^n$ of {B}rownian
  motion.
\newblock {\em Probability Theory and Related Fields}, 79(1):95--97, 1988.

\bibitem{DL82}
H.~Donnelly and P.~Li.
\newblock Lower bounds for the eigenvalues of {R}iemannian manifolds.
\newblock {\em Michigan Math. J.}, 29(2):149--161, 1982.

\bibitem{Doe74}
G.~Doetsch.
\newblock Introduction to the theory and application of the {L}aplace
  transformation.
\newblock pages vii+326, 1974.
\newblock Translated from the second German edition by Walter Nader.

\bibitem{Gal63}
  A.~Galmarino.
  \newblock {\em Representation of an isotropic diffusion as a skew
    product}.
  \newblock{\em Z. Wahrscheinlichkeitstheorie und Verw. Gebiete},
  1:359--378, 1963.

\bibitem{Hun56}
G.A.~Hunt.
\newblock Semi-groups of measures on {L}ie groups.
\newblock {\em Trans. Amer. Math. Soc.}, 81:264--293, 1956.

\bibitem{Hernandez}
P. J.~M\'endez-Hern\'andez.
\newblock Exit times from cones in $\R^d$ of symmetric stable processes.
\newblock {\em Illinois
J. Math.}, 46(1): 155--163, 2002.

\bibitem{Lam72}
J.~Lamperti.
\newblock Semi-stable {M}arkov processes. {I}.
\newblock {\em Z. Wahrscheinlichkeitstheorie und Verw. Gebiete}, 22:205--225,
  1972.

\bibitem{kulcz}
T. Kulczycki.
\newblock Exit time and Green function of cone for symmetric stable processes.
\newblock {\em Probab. Math. Statist.}, 19(2):337--374, 1999.

\bibitem{Lia04a}
M.~Liao.
\newblock {\em L{\'e}vy processes in {L}ie groups}, volume 162 of {\em
  Cambridge Tracts in Mathematics}.
\newblock Cambridge University Press, Cambridge, 2004.

\bibitem{LW07d}
M.~Liao and L.~Wang.
\newblock L\'evy-{K}hinchin formula and existence of densities for convolution
  semigroups on symmetric spaces.
\newblock {\em Potential Anal.}, 27(2):133--150, 2007.

\bibitem{LW11}
M.~Liao and L.~Wang.
\newblock Isotropic self-similar {M}arkov processes.
\newblock {\em Stochastic Process. Appl.}, 121(9):2064--2071, 2011.

\bibitem{MY05b}
H.~Matsumoto and M.~Yor.
\newblock Exponential functionals of {B}rownian motion. {I}. {P}robability laws
  at fixed time.
\newblock {\em Probab. Surv.}, 2:312--347, 2005.

\bibitem{MZ06}
K.~Maulik and B.~Zwart.
\newblock Tail asymptotics for exponential functionals of {L}\'evy processes.
\newblock {\em Stochastic Process. Appl.}, 116(2):156--177, 2006.

\bibitem{Riv05a}
V.~Rivero.
\newblock Recurrent extensions of self-similar {M}arkov processes and
  {C}ram\'er's condition.
\newblock {\em Bernoulli}, 11(3):471--509, 2005.

\bibitem{Sch74}
H.H.~Schaefer.
\newblock {\em Banach lattices and positive operators}.
\newblock Springer-Verlag, New York, 1974.
\newblock Die Grundlehren der mathematischen Wissenschaften, Band 215.

\bibitem{Yor92}
M.~Yor.
\newblock Sur les lois des fonctionnelles exponentielles du mouvement brownien,
  consid\'er\'ees en certains instants al\'eatoires.
\newblock {\em C. R. Acad. Sci. Paris S\'er. I Math.}, 314(12):951--956, 1992.
 \end{thebibliography}
\end{document}